\documentclass[11pt]{article}
\usepackage[T1]{fontenc}
\usepackage[utf8]{inputenc}
\usepackage{parskip}
\usepackage{lmodern}
\usepackage{amsmath,amssymb,amsthm}
\usepackage[margin=1in]{geometry}
\usepackage{microtype}
\usepackage{enumitem}
\usepackage{needspace}
\usepackage[hidelinks,bookmarksnumbered=true]{hyperref}
\newtheorem{theorem}{Theorem}
\newtheorem{lemma}[theorem]{Lemma}
\newtheorem{corollary}[theorem]{Corollary}
\theoremstyle{remark}
\newtheorem{remark}[theorem]{Remark}
\numberwithin{equation}{section}
\newcommand{\E}{\mathbb E}
\title{General-position sets and linear hypergraphs}
\author{Thang Pham \thanks{Institute of Mathematics and Interdisciplinary Sciences, Xidian University. \newline
\hspace*{0.45cm} Email: {\tt thangphammath@xidian.edu.cn}} \and Le Quang-Ham\thanks{The Viet Nam National Institute of Educational Sciences.  Email: {\tt hamlq@vnies.edu.vn}} \and Boqing Xue\thanks{Institute of Mathematical Sciences, ShanghaiTech University. ~Email: {\tt xuebq@shanghaitech.edu.cn}}}
\date{}
\hypersetup{pdftitle={General-position sets and linear hypergraphs},
  pdfauthor={Thang Pham, Le Quang-Ham, Boqing Xue}}
\begin{document}
\maketitle

\begin{abstract}
We prove that every linear hypergraph on $N$ vertices with edge sizes $O(\sqrt N)$ and $O(N^2\log N)$ triples contained in its edges has a subset of size $\Omega(\sqrt N)$ meeting each edge in at most two vertices. As an application, every set of $N$ planar points with at most $\sqrt N$ collinear points contains $\Omega(\sqrt N)$ points in general position. This proves a conjecture of Payne and Wood and gives the optimal quadratic bound for a geometric Ramsey problem posed by Gowers.
\end{abstract}

{\footnotesize
\textit{2020 Mathematics Subject Classification.}
Primary 05C65, 52C10; Secondary 05D10, 05D40.\\
\textit{Key words and phrases.}
General-position sets, linear hypergraphs, independent sets,
Payne--Wood conjecture, geometric Ramsey theory.
}

\section{Introduction}

A hypergraph $\mathcal H=(V,\mathcal E)$ is called \emph{linear} if distinct edges meet in at most one vertex. Its associated $3$-uniform hypergraph $H$ has vertex set $V$ and edge set
\[
 E(H)=\bigcup_{B\in\mathcal E}\binom B3.
\]
We call these edges \emph{forbidden triples}. A set $I\subseteq V$ is \emph{independent} in $H$ if it contains no forbidden triple, i.e.,  $|I\cap B|\le2$ for every $B\in\mathcal E$. Unless specified otherwise, edges refer to $\mathcal H$ and independence refers to $H$. All logarithms are natural.

Our fundamental result gives an independent set of square-root size under bounds on the edge sizes and the number of forbidden triples.

\begin{theorem}\label{thm:main}
For every $C_0\ge1$, there are constants $c>0$ and $N_0$ such that the following holds. Let $\mathcal H=(V,\mathcal E)$ be a linear hypergraph with $N=|V|\ge N_0$. If
\[
 \max_{B\in\mathcal E}|B|\le C_0\sqrt N,
 \qquad
 \sum_{B\in\mathcal E}\binom{|B|}{3}\le C_0N^2\log N,
\]
then there exists an independent set $I\subseteq V$ with $|I|\ge c\sqrt N$.
\end{theorem}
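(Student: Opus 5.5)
The plan is to reduce to an independence theorem for sparse $3$-uniform hypergraphs with small codegrees. The naive route of sampling and then deleting is too weak. Sampling each vertex with probability $p$ and deleting one vertex from every surviving forbidden triple gives an independent set of size about $pN-p^3C_0N^2\log N$. Optimising over $p$ yields only $\Omega(\sqrt{N/\log N})$, so the $\log N$ factor in the triple count must be recovered.

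The right benchmark is the Ajtai--Koml\'os--Pintz--Spencer--Szemer\'edi / Duke--Lefmann--R\"odl bound for linear (uncrowded) $3$-graphs, namely $\alpha\ge c\,(n/\sqrt d)\sqrt{\log d}$. Here the average degree of $H$ is $d\approx 3C_0N\log N$, so this bound reads $c\,(N/\sqrt{N\log N})\sqrt{\log N}=c\sqrt N$, which is exactly the claim. However, $H$ is far from linear: inside a block $B$ every pair lies in $|B|-2$ triples. What linearity of $\mathcal H$ does give is that each pair of vertices lies in at most one block. Hence the codegree of a pair in $H$ is at most $\max|B|-2\le C_0\sqrt N$, and all $2$-cycles of $H$ are confined to single blocks.

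\emph{Step 1 (sparsification).} Take a random subset $S\subseteq V$ with density $p=N^{-1/2+\varepsilon}$ for a small fixed $\varepsilon>0$. Then $|S|\approx pN$. The number of surviving triples is $\lesssim p^3C_0N^2\log N$, so the average degree in $H[S]$ is $D\approx p^2N\log N=N^{2\varepsilon}\log N$. By Chernoff bounds and a union bound, every block satisfies $|B\cap S|\le 2C_0N^{\varepsilon}$, using $\max|B|\le C_0\sqrt N$. Consequently every pair has codegree at most $2C_0N^{\varepsilon}\approx D^{1/2}$ in $H[S]$. We then delete the vertices of $S$ whose degree exceeds $10D$; by Markov's inequality at most a tenth of $S$ is removed in expectation. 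This leaves a $3$-graph $H'$ on $n\asymp pN$ vertices with maximum degree $O(D)$ and maximum codegree $O(D^{1/2})$, where $\log D\asymp \varepsilon\log N$.

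\emph{Step 2 (the key independence bound).} Next prove that a $3$-graph on $n$ vertices with maximum degree $\le D$ and codegree $\le D^{1/2-\delta}$ has an independent set of size $c\,(n/\sqrt D)\sqrt{\log D}$. The codegree needs to be slightly below $D^{1/2}$; we arrange this by taking $p$ a bit smaller, or by thinning every block to at most $N^{\varepsilon(1-\delta)}$ vertices via an extra random deletion inside oversized blocks. Applied to $H'$, this gives an independent set of size
\[
 c\,\frac{pN}{\sqrt{p^2N\log N}}\sqrt{\varepsilon\log N}\asymp c\sqrt{\varepsilon}\,\sqrt N .
\]
Any independent set of $H'$ is independent in $H$, since $H'$ is an induced subhypergraph of $H$.

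To prove Step 2, first try the Rödl nibble / semi-random method used for uncrowded hypergraphs. One builds the independent set in rounds, each time adding a small random batch of vertices and deleting the vertices that would complete a triple with two chosen vertices. The process tracks vertex degrees and the induced graph of ``dangerous pairs''. The codegree hypothesis replaces linearity: it ensures that the $2$-cycles, which are all contained in blocks, contribute a lower-order error of relative size $D^{-\delta}$ to the concentration estimates. A fallback is the Kostochka--Mubayi--Verstra\"ete-type theorem on $3$-graphs with few $2$-cycles. The $2$-cycles in $H'$ number about $\sum_B|B\cap S|^4\lesssim p^4\sqrt N\sum_B|B|^3$, which is smaller than $nD^{3/2}$ by a power of $N$ once $\varepsilon$ is small.

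I expect Step 2 to be the main obstacle: retaining the full $\sqrt{\log D}$ gain while the hypergraph has codegrees as large as a power of $D$, rather than being linear. The concentration of the dangerous-pair degrees across nibble rounds is where the block structure must be handled with care. The choice of $\varepsilon$ and $\delta$ is then tuned so that all error terms are $o(1)$ for $N\ge N_0$.
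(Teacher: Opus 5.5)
Your Step 1 is fine. The proof breaks at Step 2: its hypothesis that the codegree is at most $D^{1/2-\delta}$ cannot be arranged without destroying the $\sqrt{\log D}$ gain. Sampling at rate $p$ multiplies block sizes, and hence codegrees, by $p$, and degrees by $p^2$. So the ratio $y/\sqrt D$ of maximum codegree to $\sqrt D$ does not change, and in general it is only $\Theta(1/\sqrt{\log N})$.

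Concretely, let $N=q^2$ and let $\mathcal E$ be the lines of $\lfloor\log N\rfloor$ parallel classes of $\mathrm{AG}(2,q)$. This hypergraph satisfies the hypotheses. Every induced subhypergraph of it satisfies $D\le\frac12\lfloor\log N\rfloor(y+1)^2$, because each vertex lies in only $\lfloor\log N\rfloor$ blocks. Hence $y\le D^{1/2-\delta}$ forces $D\le(2\log N)^{1/(2\delta)}$, that is, $\log D=O(\delta^{-1}\log\log N)$. Both of your fixes, a smaller $p$ and random thinning of oversized blocks, are random sparsifications. In this example they keep $n'/\sqrt D\lesssim\sqrt{N/\log N}$, so the bound Step 2 then guarantees is only $O(\sqrt{N\log\log N/(\delta\log N)})$.

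Step 2 as you state it is essentially the Cooper--Mubayi colouring theorem. This computation is exactly why that theorem yields only $\Omega(\sqrt{N\log\log N/\log N})$ here, as the paper's introduction notes. The theorem you are proving is precisely the claim that this $\log\log$ loss can be removed.

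Your fallback has the same defect, hidden by an arithmetic slip. Use $|B|\le C_0\sqrt N$ and $\sum_B|B|^3\le 27\sum_B\binom{|B|}{3}$. Then the expected number of $2$-cycles in $H'$ is $O(p^4N^{5/2}\log N)$. On the other hand, $nD^{3/2}\asymp pN(p^2N\log N)^{3/2}=p^4N^{5/2}(\log N)^{3/2}$. The saving is a factor $\sqrt{\log N}$, not a power of $N$, and the affine example shows this is sharp.

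So with the codegree you actually have, $\Theta(\sqrt{D/\log D})$, Step 2 is not a lemma you can quote; it is the whole content of the theorem. Your nibble sketch supplies no mechanism for it, since the within-block $2$-cycles are no longer an error of relative size $D^{-\delta}$.

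The paper never passes to a codegree condition. Instead it treats the complete triple systems inside blocks exactly:
\begin{itemize}
\item It runs a capped ordered selection process. When a vertex is rejected, the weights of the affected vertices are inflated by $1/(1-q)$, so weights are preserved in expectation until the cap $\tau$ applies.
\item It tracks, for each vertex, a potential with block terms $G_B$, $\Phi_{B,0}$, $\Phi_{B,1}$. These are designed so that interactions inside a single block cost nothing in expectation.
\item The only losses come from configurations spanning three distinct blocks. By linearity there are $O(D^{7/3})$ of these per vertex (Lemmas~\ref{lem:pencil} and~\ref{lem:rooted-triangles}).
\item The sampling factor $p^6$ and the cap factor $\tau^3$ together make the total contribution of these configurations negligible.
\end{itemize}
This exact single-block accounting is the idea your proposal is missing.
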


The order $\sqrt N$ is best possible. For $N=k^2$, take $k$ disjoint edges of size $k$. These edges contain $k\binom k3=O(N^2)$ forbidden triples, but every independent set has at most $2k=2\sqrt N$ vertices.

Subsets meeting each edge of a linear hypergraph in at most two vertices were studied by Axenovich, Brada\v c, Gishboliner, Mubayi and Weber~\cite[Section~5]{ABGMW}. When the edges have equal size, the associated hypergraph belongs to the class of unions of nearly disjoint hypergraph cliques studied by Mubayi and Verstra\"ete~\cite{MV}. Although $\mathcal H$ is linear, $H$ need not be: a pair in an edge $B$ belongs to $|B|-2$ forbidden triples. Thus the independence theorem of Duke, Lefmann and R\"odl~\cite{DLR} for uniform linear hypergraphs does not apply directly.

Under the hypotheses of Theorem~\ref{thm:main}, deleting at most half the vertices leaves a subhypergraph of $H$ with maximum degree $O(N\log N)$ and maximum pair codegree $O(\sqrt N)$. The coloring theorem of Cooper and Mubayi~\cite[Theorem~5]{CM} then gives an independent set of size
\[
 \Omega\!\left(\sqrt{\frac{N\log\log N}{\log N}}\right).
\]
Theorem~\ref{thm:main} removes this logarithmic loss by using the structure of the complete triple systems within the edges of $\mathcal H$.

We now give geometric applications of Theorem~\ref{thm:main}. A point set in $\mathbb F^2$, where $\mathbb F$ is a field, is said to be \emph{in general position} if it contains no three collinear points. We first consider point sets in the real plane $\mathbb R^2$. The following theorem resolves the conjecture of Payne and Wood~\cite[Conjecture~4.1]{PW}.

\begin{theorem}\label{thm:planar}
For every $A\ge1$, there are constants $c>0$ and $N_0$, depending only on $A$, such that every set of $N\ge N_0$ points in $\mathbb R^2$ with at most $A\sqrt N$ collinear points contains at least $c\sqrt N$ points in general position.
\end{theorem}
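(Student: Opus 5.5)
We deduce Theorem~\ref{thm:planar} from Theorem~\ref{thm:main} by encoding the point set as a linear hypergraph. Let $P\subseteq\mathbb R^2$ with $|P|=N$ and at most $A\sqrt N$ points on any line. Set $V=P$, and let $\mathcal E$ consist of the sets $\ell\cap P$ over all lines $\ell$ with $|\ell\cap P|\ge3$. Two distinct lines meet in at most one point, so $\mathcal H=(V,\mathcal E)$ is linear. A subset $I\subseteq P$ is in general position exactly when $|I\cap B|\le2$ for every $B\in\mathcal E$, i.e.\ when $I$ is independent in the associated $3$-uniform hypergraph. The first hypothesis of Theorem~\ref{thm:main} holds with $C_0\ge A$, since every edge has size at most $A\sqrt N$.

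The remaining task is to bound the number of collinear triples, $\sum_{B}\binom{|B|}3$, by $O_A(N^2\log N)$. Let $t_k$ be the number of lines containing at least $k$ points of $P$. By Szemerédi--Trotter, $t_k=O(N^2/k^3+N/k)$ for $k\ge2$. Split the sum dyadically over lines with $|B|\in[2^j,2^{j+1})$. Each such band contributes $O(t_{2^j}\cdot 2^{3j})=O(N^2+N2^{2j})$. The bands run only for $2^j\le A\sqrt N$, so there are $O(\log N)$ of them. The $N^2$ terms therefore sum to $O(N^2\log N)$. The terms $N2^{2j}$ form a geometric series dominated by its last term, which is $O(A^2N^2)$. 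Hence
\[
\sum_{B\in\mathcal E}\binom{|B|}{3}\le C_1A^2N^2\log N
\]
for an absolute constant $C_1$, and taking $C_0=\max(A,C_1A^2,1)$ verifies both hypotheses.

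Applying Theorem~\ref{thm:main} with this $C_0$ gives constants $c,N_0$ depending only on $A$, and an independent set of size $c\sqrt N$, which is the desired general-position set. The only substantive step is the triple count. The point to check there is that the $N/k$ term of Szemerédi--Trotter causes no trouble, and this is exactly where the collinearity bound $A\sqrt N$ is used: it caps $k$ so that the term contributes at most $N\cdot N=N^2$ in total. The $\log N$ loss coming from the $N^2/k^3$ term matches what Theorem~\ref{thm:main} allows.
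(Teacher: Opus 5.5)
Your proposal is correct and follows the paper's argument. Both encode the lines with at least three points as a linear hypergraph, bound the collinear triples by $O(N^2\log s+Ns^2)$ with $s=A\sqrt N$ via Szemer\'edi--Trotter, and then apply Theorem~\ref{thm:main}; you sum dyadically, whereas the paper uses the identity $\sum_{j}\binom{j-1}{2}|\mathcal L_j|$ through Payne--Wood's lemma.
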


Payne and Wood~\cite{PW} proved that $N$ planar points with at most $s$ collinear points contain a general-position subset of size $\Omega(\sqrt{N/\log s})$ when $3\le s=O(\sqrt N)$. They also proved the stronger bound $\Omega(\sqrt{N\log N/\log s})$ for $s\le N^{(1-\varepsilon)/2}$, with any fixed $0<\varepsilon<1$. Thus the bound $\Omega(\sqrt N)$ was known when every line contains at most $N^\alpha$ points for some fixed $\alpha<1/2$. At $s=\sqrt N$, their bound gives only $\Omega(\sqrt{N/\log N})$, and they conjectured that the logarithmic loss could be removed. Balogh, Clemen, Dumitrescu and Liu~\cite[Proposition~2]{BCDL} subsequently improved this bound to $\Omega(\sqrt{N\log\log N/\log N})$, using the coloring theorem of Cooper and Mubayi~\cite{CM}. Theorem~\ref{thm:planar} removes the remaining logarithmic loss. The upper bound of Balogh, Clemen, Dumitrescu and Liu~\cite[Theorem~1]{BCDL} shows that the order $\sqrt N$ is optimal when the permitted collinearity is a fixed positive multiple of $\sqrt N$.

In 2011, Gowers~\cite{Gowers} asked how many planar points are needed to guarantee either $k$ collinear points or $k$ points in general position. The following theorem gives the optimal quadratic bound.

\begin{theorem}\label{thm:planar2}
There is an absolute constant $K_0>0$ such that, for every integer $k\ge3$, every set of at least $K_0k^2$ points in $\mathbb R^2$ contains either $k$ collinear points or $k$ points in general position.
\end{theorem}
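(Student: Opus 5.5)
We derive Theorem~\ref{thm:planar2} from Theorem~\ref{thm:planar}, applied with a fixed constant $A$. Let $P$ be a set of at least $K_0k^2$ points in $\mathbb R^2$ that contains no $k$ collinear points. Choose $N=\lceil K_0k^2\rceil$ and pass to any subset $P'\subseteq P$ with $|P'|=N$. Every line then contains at most $k-1<k\le\sqrt{N/K_0}\le\sqrt N$ points of $P'$. We apply Theorem~\ref{thm:planar} with $A=1$ and obtain constants $c>0$ and $N_0$. If $N\ge N_0$, then $P'$ contains at least $c\sqrt N\ge c\sqrt{K_0}\,k$ points in general position. Taking $K_0\ge c^{-2}$ makes this at least $k$.

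Small values of $k$ need a separate argument. The requirement $N\ge N_0$ holds as soon as $K_0k^2\ge N_0$, so we also take $K_0\ge N_0$, and then $N\ge N_0$ for every $k\ge1$. This disposes of all $k\ge3$ at once, with $K_0=\max(c^{-2},N_0)$, where $c$ and $N_0$ are the constants for $A=1$.

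The only subtle point is that Theorem~\ref{thm:planar} is stated for sets of size exactly $N$ with collinearity at most $A\sqrt N$. Passing to a subset of size $N$ keeps the collinearity below $k$, so the hypotheses are preserved. I expect no real obstacle: all the work lies in Theorem~\ref{thm:planar}, and hence in Theorem~\ref{thm:main} applied to the hypergraph of lines, which is linear because two distinct lines share at most one point. If one instead wanted to derive the result directly from Theorem~\ref{thm:main}, the key input would be the Szemerédi--Trotter bound on collinear triples, $\sum_\ell\binom{|\ell\cap P|}{3}=O(N^2\log N)$. But that input is already contained in Theorem~\ref{thm:planar}, so invoking that theorem suffices.
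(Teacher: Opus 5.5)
Your argument is correct and matches the paper's proof: apply Theorem~\ref{thm:planar} with $A=1$ and take $K_0\ge\max\{1,N_0,c^{-2}\}$. The paper puts the $1$ in this maximum explicitly, which you implicitly need for $\sqrt{N/K_0}\le\sqrt N$. Your passage to a subset of size exactly $\lceil K_0k^2\rceil$ is unnecessary for finite sets, since Theorem~\ref{thm:planar} applies to a set of any size $N\ge N_0$, but it is harmless.
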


A greedy argument gives an upper bound of order $k^3$. Payne and Wood~\cite{PW} improved this to $O(k^2\log k)$, and Hajnal and Szemer\'edi~\cite[Theorem~3]{HS} obtained $O(k^2\log k/\log\log k)$ by combining sparsification with independence estimates from the semi-random method. For the matching lower bound, put $m=\lfloor(k-1)/2\rfloor$. The grid $\{1,\ldots,m\}^2$ has $\Theta(k^2)$ points and at most $m<k$ points on any line. Every general-position subset contains at most two points in each row, hence fewer than $k$ points.

\begin{remark}\label{rem:characteristic-zero}
Theorems~\ref{thm:planar} and~\ref{thm:planar2} hold over every field of characteristic zero, with constants independent of the field. This follows from T\'oth's complex incidence theorem~\cite{Toth} (see Section~\ref{sec:applications}).
\end{remark}

Lewko's incidence theorem~\cite{Lewko} also gives an extension to positive characteristic.

\begin{corollary}\label{cor:finite-field}
For every $A\ge1$, there are constants $c>0$ and $N_0$, depending only on $A$, with the following property. Let $\mathbb F$ be a field of characteristic $p>0$, and let $V\subseteq\mathbb F^2$ have $N\ge N_0$ points. If $\frac{N}{\log N}\le p$ and $|V\cap\ell|\le A\sqrt N$ for every affine line $\ell$, then $V$ contains at least $c\sqrt N$ points in general position.
\end{corollary}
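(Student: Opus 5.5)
We reduce Corollary~\ref{cor:finite-field} to Theorem~\ref{thm:main} applied to the hypergraph of rich lines. Let $\mathcal E$ consist of the sets $V\cap\ell$ over all affine lines $\ell$ with $|V\cap\ell|\ge3$. Two distinct lines meet in at most one point, so $\mathcal H=(V,\mathcal E)$ is linear. A subset of $V$ is independent in the associated $3$-uniform hypergraph exactly when it meets every line in at most two points, i.e.\ when it is in general position. The collinearity hypothesis gives $\max_B|B|\le A\sqrt N$. It therefore remains only to verify the triple-count condition $\sum_{B}\binom{|B|}{3}\le C_0N^2\log N$ with some $C_0=C_0(A)$. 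We then take $C_0:=\max(A,C_0')$ and invoke Theorem~\ref{thm:main}.

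To count collinear triples, we group the lines dyadically by richness. For $2^j\le A\sqrt N$, let $L_j$ be the set of lines containing between $2^j$ and $2^{j+1}$ points of $V$. Then
\[
\sum_B\binom{|B|}{3}\le\sum_j |L_j|\,2^{3(j+1)}.
\]
We bound $|L_j|$ using Lewko's incidence theorem for point--line configurations in $\mathbb F^2$ with $\operatorname{char}\mathbb F=p$. It gives a Szemerédi--Trotter-type bound on the number of $r$-rich lines, valid when $N$ is small relative to $p$, of the shape $|L_j|\lesssim N^2/2^{(3+\delta)j}+N/2^j$. Alternatively it may take the form $I(P,L)\lesssim |P|^{a}|L|^{b}+|P|+|L|$ with exponents beating the trivial bound $|P||L|^{1/2}+|L|$, which can be converted into a rich-lines bound in the standard way. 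Substituting this into the dyadic sum, the rich-lines term contributes $\sum_j N^2 2^{-\delta j}=O(N^2)$. The linear term contributes $\sum_j N2^{2j}\lesssim N\cdot A^2N=O(A^2N^2)$. The crude Cauchy--Schwarz incidence bound alone gives $|L_j|\lesssim N^2/2^{3j}$ for $2^j\gtrsim\sqrt N$, and hence $O(N^2)$ per dyadic scale. Over $O(\log N)$ scales this yields exactly the permitted $N^2\log N$. So even the weaker Szemerédi--Trotter-strength estimate $|L_r|\lesssim N^2/r^3+N/r$ would suffice, and it is precisely the loss the hypothesis of Theorem~\ref{thm:main} allows.

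The main obstacle is the range of validity of the incidence bound in positive characteristic. Lewko's theorem, and its predecessors such as Stevens--de Zeeuw, require the sizes of the point and line sets to be bounded in terms of $p$. This is presumably where the condition $N/\log N\le p$ enters. I would apply the incidence bound not to all of $V$ with all lines, but scale by scale to $V$ together with $L_j$, whose size is at most $N^2/2^{2j}$ by double counting pairs. Where the line set is too large relative to $p$, I would pass to random subsets of $L_j$ of size within the admissible range and rescale. I would then check that the hypothesis $N\le p\log N$ keeps every scale $2^j\le A\sqrt N$ within range. This may be exactly why a $\log N$ factor of slack is tolerated. For small $j$ (bounded richness), the trivial bound $\sum_B\binom{|B|}3\le\sum_B|B|^2\cdot 2^{j+1}\le N^2 2^{j+1}$, obtained by counting pairs, handles $O(1)$ scales at cost $O(N^2)$. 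Summing all scales gives $\sum_B\binom{|B|}{3}\le C_0'(A)N^2\log N$, and Theorem~\ref{thm:main} then yields the general-position set of size $c\sqrt N$.
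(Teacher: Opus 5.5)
Your reduction matches the paper's: the sets $V\cap\ell$ with at least three points form a linear hypergraph, its independent sets are exactly the general-position subsets, edges have size at most $A\sqrt N$, and everything rests on proving $\sum_\ell\binom{|V\cap\ell|}{3}=O_A(N^2\log N)$ (Lemma~\ref{lem:geometric-triples}). That estimate is where the proposal has a genuine gap. You never fix the positive-characteristic incidence input. The mechanism you guess is a size restriction, met by thinning $L_j$, with $N\le p\log N$ putting every scale in range; this is not how $p$ enters.

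The paper uses Lewko's bound with no size restriction but with an additive error term:
\[
j|\mathcal L_j|\le 3(N|\mathcal L_j|)^{2/3}+N+|\mathcal L_j|+2N|\mathcal L_j|/p,
\]
where $\mathcal L_j$ is the set of lines with at least $j$ points. The last two terms can be absorbed only when $j\ge j_0=\max\{3,\lceil 2+4N/p\rceil\}$. There one gets $|\mathcal L_j|=O(N^2/j^3+N/j)$, which contributes $O(N^2\log s+Ns^2)$ through $\sum_j\binom{j-1}{2}|\mathcal L_j|$. For $3\le j<j_0$ only pair counting $|\mathcal L_j|\le N^2/(j(j-1))$ is available. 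This costs $O(N^2)$ per richness level, so $O(N^2j_0)=O(N^2+N^3/p)$ in total. The hypothesis $N/\log N\le p$ is used exactly to make $N^3/p\le N^2\log N$.

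So your claim that every scale is in range fails when $p$ is close to $N/\log N$: lines with up to about $4\log N$ points are out of range. Your small-$j$ step (``$O(1)$ scales at cost $O(N^2)$'') also undercounts. The available bound for that range is of order $N^2\log N$, and it is acceptable only because of the hypothesis on $p$. Thinning the line set cannot help either. The error term is proportional to $|\mathcal L_j|$, so its ratio to $j|\mathcal L_j|$ is unchanged.

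Two heuristic claims in your sketch are also wrong in ways that matter here.
\begin{itemize}
\item No bound of the form $N^2/r^{3+\delta}$ is possible, since Szemer\'edi--Trotter is sharp for grids with small coordinates.
\item A bound that merely beats the trivial exponents does not suffice. The $11/15$ exponent of Stevens--de Zeeuw gives only $|\mathcal L_r|\lesssim N^{11/4}r^{-15/4}$. This improves on pair counting only for $r\gtrsim N^{3/7}$, so the resulting bound for smaller $r$ is $O(N^{17/7})$.
\end{itemize}
Likewise, Cauchy--Schwarz gives $|\mathcal L_r|\lesssim N/r$ only for $r\gtrsim\sqrt N$, which is $O(\log A)$ dyadic scales in your range. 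It gives nothing below $\sqrt N$ and is not the source of the $\log N$. That factor comes from the Szemer\'edi--Trotter-strength bound $N^2/r^3$ on $j_0\le r\le\sqrt N$, which is the $N^2\log s$ term. You correctly note that this strength suffices. What is missing is the precise form of Lewko's estimate and the separate accounting for $j<j_0$, where the hypothesis on $p$ is used.
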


We outline the proof of Theorem~\ref{thm:main}. The bound on the number of forbidden triples allows us to delete at most half the vertices so that the remaining $3$-uniform hypergraph has maximum degree $O(N\log N)$. We then construct a random independent set in which each remaining vertex is selected with probability $\Omega(N^{-1/2})$. Summing these probabilities gives an expected size of at least a constant multiple of $\sqrt N$, so some independent set has the required size.

The construction has two stages. First, we sample vertices independently and remove those with unusually large sampled degree or belonging to an unusually large sampled edge. This reduces both degrees and edge sizes while retaining each vertex with probability comparable to the sampling probability. We then process the retained vertices in a fixed order. Each unprocessed vertex has a changing weight, which gives its selection probability when its turn arrives. Selecting a vertex sets to zero the weights of later vertices that would complete a forbidden triple with it and an already selected vertex. When the current vertex is not selected, we increase those same weights to compensate. All weights are kept at or below a small cap. These updates ensure independence and preserve each affected weight in conditional expectation whenever the cap does not apply.

The main difficulty is to control the loss caused by the cap. For each vertex, we introduce a nonnegative potential that depends on its weight and the weights on its incident edges. Its initial expectation is of order $N^{-1/2}$, and its final value is at most the indicator that the vertex is selected. Our one-step estimates show that any decrease in its conditional expectation is controlled by interactions between distinct edges. Using linearity and the degree bound, we bound the number of configurations that can give rise to these interactions. The initial random sampling makes such configurations unlikely to survive, while the cap bounds their contributions to the expected loss. Together, these estimates show that the total expected loss is negligible
compared with the initial expected potential. Summing the one-step estimates therefore gives the desired inclusion probability, as established in Lemma~\ref{lem:marginal}.

Our construction is a capped variant of the ordered selection process of Martinsson and Steiner~\cite[Section~4]{MS} and its hypergraph extension by Yu and Zhang~\cite[Algorithm~1]{YZ}. The logarithmic term in our potential is related to the entropy estimates of Dhawan~\cite{Dhawan}. We prove all the estimates needed here. Section~\ref{sec:proof} gives the proof of Theorem~\ref{thm:main}, and Section~\ref{sec:applications} proves the geometric applications.

\section{Proof of the independent-set theorem} \label{sec:proof}

\subsection{Combinatorial estimates}

Throughout this section, edges of $\mathcal H$ of size less than three are discarded, since they contribute no forbidden triples. For $A\subseteq V$, we restrict $\mathcal H$ to $A$ by replacing each edge $B\in\mathcal E$ with $B\cap A$ and discarding intersections of size less than three. Its associated $3$-uniform hypergraph is exactly $H[A]$, the subhypergraph of $H$ induced by $A$.

We use the convention $\binom{k}{r}=0$ for integers $k,r$ with $0\le k<r$. For $A\subseteq V$ and $v\in A$, let
\[
 d_A(v) := \deg_{H[A]}(v)=\sum_{\substack{B\in\mathcal E\\B \ni v}}\binom{|B\cap A|-1}{2}.
\]
Write $d(v)=d_V(v)$, and let $D\ge1$ be an upper bound for the degrees $d(v)$.

For distinct $u,v\in V$, let $B_{uv}$ be their common edge, if one exists, and let $B_{uv}=\varnothing$ otherwise. Linearity makes this edge unique when it exists. Set
\[
 w(u,v)=
 \begin{cases}
 |B_{uv}|-2,&B_{uv}\ne\varnothing,\\
 0,&B_{uv}=\varnothing,
 \end{cases}
 \qquad w(v,v)=0.
\]
Thus $w(u,v)$ counts the forbidden triples containing $u,v$. For every vertex $v$, we deduce that
\begin{equation}\label{eq:weight-identities}
 \sum_xw(v,x)=2d(v)\le2D,
 \qquad
 \sum_{B\ni v}(|B|-2)^2\le2d(v)\le2D.
\end{equation}
Indeed, an edge of size $b$ contributes $(b-1)(b-2)$ to the first sum, and $(b-2)^2\le(b-1)(b-2)$ to the second.

\begin{lemma}\label{lem:first-pencil}
If $B\in\mathcal E$ and $v\notin B$, then
\[
 \sum_{x\in B}w(v,x)\le\sqrt{2|B|D}.
\]
\end{lemma}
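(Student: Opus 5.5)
By linearity, for $v\notin B$ and distinct $x,x'\in B$, the edges $B_{vx}$ and $B_{vx'}$ are distinct whenever both are nonempty. Otherwise a single edge would contain $v,x,x'$, and it would meet $B$ in the two vertices $x,x'$, so it would have to equal $B$; this is impossible since $v\notin B$. Moreover, each edge $B'\ni v$ with $B'\ne B$ meets $B$ in at most one vertex. Hence the map $x\mapsto B_{vx}$, restricted to those $x\in B$ with $B_{vx}\ne\varnothing$, is injective into the set of edges through $v$.

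We can therefore rewrite the sum as
\[
 \sum_{x\in B}w(v,x)=\sum_{x\in B,\ B_{vx}\ne\varnothing}(|B_{vx}|-2),
\]
a sum over at most $|B|$ distinct edges through $v$. Apply Cauchy--Schwarz to it:
\[
 \sum_{x\in B}w(v,x)\le\sqrt{|B|}\,\Bigl(\sum_{x\in B,\ B_{vx}\ne\varnothing}(|B_{vx}|-2)^2\Bigr)^{1/2}.
\]
The edges $B_{vx}$ are distinct and all contain $v$, so the inner sum is at most $\sum_{B'\ni v}(|B'|-2)^2$. By the second inequality in \eqref{eq:weight-identities}, this is at most $2D$. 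Substituting gives $\sum_{x\in B}w(v,x)\le\sqrt{2|B|D}$, as claimed.

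Every step is routine. The only point needing care is the injectivity, which is exactly where linearity and the hypothesis $v\notin B$ are used.
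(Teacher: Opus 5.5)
Your proposal is correct and matches the paper's proof: you use linearity and $v\notin B$ to show that the nonempty edges $B_{vx}$ with $x\in B$ are distinct, bound the sum of squares by $\sum_{B'\ni v}(|B'|-2)^2\le 2D$, and finish with Cauchy--Schwarz over at most $|B|$ terms. The only difference is that you apply Cauchy--Schwarz before bounding the squared sum rather than after, which changes nothing.
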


\begin{proof}
The nonempty edges $B_{vx}$, for $x\in B$, are distinct. Otherwise one such edge would meet $B$ in two points and would have to equal $B$, contrary to $v\notin B$. Therefore,
\[
 \sum_{x\in B}w(v,x)^2  \le\sum_{B'\ni v}(|B'|-2)^2\le2D.
\]
Cauchy--Schwarz now gives
\[
\Big(\sum_{x\in B}w(v,x)\Big)^2  \le |B|\sum_{x\in B}w(v,x)^2  \le 2|B|D.
\]
\end{proof}

\begin{lemma}\label{lem:pencil}
For distinct $u,v\in V$,
\[
 \sum_{x\in V\setminus B_{uv}}w(u,x)w(v,x)\le7D^{4/3}.
\]
\end{lemma}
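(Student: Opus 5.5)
The plan is to group the sum according to the edge of $\mathcal H$ that joins $u$ to $x$, apply Lemma~\ref{lem:first-pencil} to the large edges, and use a cruder bound on the small edges.

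\textbf{Step 1: regrouping.} Only terms with $w(u,x)>0$ contribute, so $B_{ux}$ is a nonempty edge $B'\ni u$. Since $x\notin B_{uv}$, we have $B'\neq B_{uv}$, and hence $v\notin B'$. The edges through $u$ pairwise meet only in $u$, and $w(u,u)=0$. Therefore each relevant $x$ lies in exactly one edge $B'\ni u$ with $B'\ne B_{uv}$, and $w(u,x)=|B'|-2$. This gives
\[
 \sum_{x\in V\setminus B_{uv}}w(u,x)w(v,x)=\sum_{\substack{B'\ni u\\ B'\neq B_{uv}}}(|B'|-2)\sum_{x\in B'}w(v,x).
\]

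\textbf{Step 2: splitting by edge size.} Fix a threshold $t\ge3$, to be chosen of order $D^{1/3}$, and split the edges $B'$ according to whether $|B'|\le t$ or $|B'|>t$.

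For the small edges, bound $|B'|-2\le t$. The sets $B'\setminus\{u\}$ are disjoint, and $w(v,u)$ appears at most once (in fact it is $0$ unless $u\in B_{uv}$, which is excluded from the $B'$ here). So the small-edge part is at most
\[
 t\sum_x w(v,x)\le 2tD,
\]
by \eqref{eq:weight-identities}.

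For the large edges, $v\notin B'$, so Lemma~\ref{lem:first-pencil} gives $\sum_{x\in B'}w(v,x)\le\sqrt{2|B'|D}$. We then write
\[
 (b-2)\sqrt{b}=(b-2)^2\cdot\frac{\sqrt b}{b-2}.
\]
For $b>t\ge3$ we have $\sqrt b/(b-2)\le C/\sqrt t$ for an absolute constant $C$; for instance $b\le 3(b-2)$ when $b\ge3$ gives $\sqrt b/(b-2)\le 3/\sqrt b<3/\sqrt t$. Using $\sum_{B'\ni u}(|B'|-2)^2\le 2D$ from \eqref{eq:weight-identities}, the large-edge part is at most
\[
 \sqrt{2D}\cdot\frac{3}{\sqrt t}\cdot 2D=\frac{6\sqrt2\,D^{3/2}}{\sqrt t}.
\]

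\textbf{Step 3: optimizing $t$.} Balancing $2tD$ against $D^{3/2}/\sqrt t$ suggests $t\asymp D^{1/3}$. Both parts are then $O(D^{4/3})$.

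The main obstacle is purely arithmetic: obtaining the specific constant $7$. I would refine the estimate $\sqrt b/(b-2)$ by using $b-2\ge b/3$ only for small $b$ and something sharper for $b$ large. I would also choose $t=cD^{1/3}$ with $c$ tuned, for example minimizing $2c+6\sqrt2/\sqrt c$ or its sharpened version.

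If $D$ is so small that $t<3$, all edges count as large, and the bound $6\sqrt2\,D^{3/2}/\sqrt3$ is at most a constant times $D^{4/3}$ since $D$ is bounded. Alternatively, in that case the small-edge bound applied to all edges gives $\max_b(b-2)\cdot 2D\le \sqrt{2D}\cdot2D$, which is again $O(D^{4/3})$ for bounded $D$. Checking that the optimized constants land below $7$ is the step I expect to need care.
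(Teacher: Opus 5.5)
Your argument has the same structure as the paper's proof. You regroup by the edge $B_{ux}\ni u$, split the edges at size of order $D^{1/3}$, and bound the small edges by $t\sum_x w(v,x)\le 2tD$. The large edges you handle with Lemma~\ref{lem:first-pencil} and $\sum_{B'\ni u}(|B'|-2)^2\le 2D$.

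As written, however, the argument only proves $O(D^{4/3})$, and your proposed remedy of tuning $c$ cannot give the stated constant $7$. With your estimate $\sqrt b/(b-2)\le 3/\sqrt b<3/\sqrt t$ and $t=cD^{1/3}$, the bound is $(2c+6\sqrt2/\sqrt c)\,D^{4/3}$. At $c=1$ this is $(2+6\sqrt2)D^{4/3}\approx 10.5\,D^{4/3}$. For large $D$ it never drops below about $9.9\,D^{4/3}$, whatever $c$ you choose. Your small-$D$ remark likewise produces only an unspecified constant.

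The loss is a factor $\sqrt3$ in the large-edge term, caused by comparing $\sqrt b$ with $b$ instead of with $b-2$. Use $b\le 3(b-2)$ inside the square root. With $\alpha=b-2$ this gives $\alpha\sqrt{2bD}\le\sqrt{6D}\,\alpha^{3/2}$. Then split according to whether $\alpha<D^{1/3}$:
\begin{itemize}
\item the small edges contribute at most $D^{1/3}\cdot 2D$;
\item the large edges contribute at most $\sqrt{6D}\,D^{-1/6}\sum\alpha^2\le 2\sqrt6\,D^{4/3}$.
\end{itemize}
The total is $(2+2\sqrt6)D^{4/3}<7D^{4/3}$. Since $\alpha\ge1$ for every edge, this works for all $D\ge1$, and no separate small-$D$ case is needed. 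This is exactly the paper's proof.

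You should also repair Step~1: the inner sum must run over $x\in B'\setminus\{u\}$, not $x\in B'$. If $B_{uv}\ne\varnothing$, then $w(v,u)=|B_{uv}|-2\ge 1$. Summing over $x\in B'$ would count this term once for every edge $B'\ni u$, not ``at most once'', so your small-edge bound $t\sum_x w(v,x)$ would not follow. With $B'\setminus\{u\}$ your displayed identity becomes exact. Enlarge the range to $B'$ only when applying Lemma~\ref{lem:first-pencil}.
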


\begin{proof}
For each edge $B\in\mathcal E$ containing $u$ but not $v$, let $\alpha_B=|B|-2$. By linearity and \eqref{eq:weight-identities},
\[
 \sum_{x\in V\setminus B_{uv}}w(u,x)w(v,x)=\sum_{\substack{B\ni u\\B\ne B_{uv}}}\alpha_B\sum_{x\in B\setminus\{u\}}w(v,x),
 \qquad
 \sum_{\substack{B\ni u\\B\ne B_{uv}}}\alpha_B^2\le2D.
\]
Split the edges according to whether $\alpha_B<D^{1/3}$. The edges with $\alpha_B<D^{1/3}$ contribute at most
\[
 D^{1/3}\sum_{x\in V}w(v,x)\le2D^{4/3}.
\]
For the other edges, Lemma~\ref{lem:first-pencil} and $|B|=\alpha_B+2\le3\alpha_B$ give the upper bound
\[
 \sum_{\alpha_B\ge D^{1/3}}\alpha_B\sqrt{2|B|D}  \le\sqrt{6D}\sum_{\alpha_B\ge D^{1/3}}\alpha_B^{3/2}  \le2\sqrt6\,D^{4/3}.
\]
Adding these contributions gives
\[
 \sum_{x\in V\setminus B_{uv}}w(u,x)w(v,x)\le(2+2\sqrt6)D^{4/3}<7D^{4/3}.
\]
\end{proof}

\begin{lemma}\label{lem:rooted-triangles}
Fix $v\in V$. Consider ordered tuples $(u_1,u_2,x,y_1,y_2)$ such that the six vertices are distinct and one of the following holds for edges $B,B_1,B_2$, with $x\notin B$:
\begin{align}
 &\{v,u_1,u_2\}\subseteq B,\qquad
   \{x,u_1,y_1\}\subseteq B_1,\qquad
   \{x,u_2,y_2\}\subseteq B_2;
   \label{eq:rooted-pattern-a}\\
 &\{v,u_1,u_2\}\subseteq B,\qquad
   \{x,v,y_1\}\subseteq B_1,\qquad
   \{x,u_1,y_2\}\subseteq B_2.
   \label{eq:rooted-pattern-b}
\end{align}
There are at most $14D^{7/3}$ configurations of each type.
\end{lemma}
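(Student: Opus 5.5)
The plan is to count each pattern by a nested sum over the vertices, in the order (pair in $B$) $\to x \to (y_1,y_2)$. At the innermost level the number of choices for each $y_i$ is a weight $w(\cdot,\cdot)$. The $x$-sum is then exactly the sum bounded in Lemma~\ref{lem:pencil}, and the outer sum is controlled by \eqref{eq:weight-identities}. Distinctness of the six vertices is only used to drop degenerate terms, so I will count without it and obtain an upper bound.

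\emph{Pattern \eqref{eq:rooted-pattern-a}.} First fix the ordered pair $(u_1,u_2)$. Since $v,u_1,u_2$ are distinct and lie in a common edge, $B=B_{u_1u_2}=B_{vu_1}$. The number of ordered pairs $(u_1,u_2)$ with this property is $\sum_{B'\ni v}(|B'|-1)(|B'|-2)=2d(v)\le 2D$. Next fix $x\notin B$. The vertex $y_1$ must lie in $B_1=B_{xu_1}$ and differ from $x,u_1$, which leaves at most $w(u_1,x)$ choices. Similarly $y_2$ has at most $w(u_2,x)$ choices. So the number of configurations is at most
\[
 \sum_{(u_1,u_2)}\ \sum_{x\in V\setminus B_{u_1u_2}} w(u_1,x)\,w(u_2,x)\le 2D\cdot 7D^{4/3}=14D^{7/3},
\]
where the inner sum is bounded by Lemma~\ref{lem:pencil} applied to the distinct vertices $u_1,u_2$.

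\emph{Pattern \eqref{eq:rooted-pattern-b}.} First choose $u_1$ sharing an edge with $v$, so $B=B_{vu_1}$. Then $u_2\in B\setminus\{v,u_1\}$ has $w(v,u_1)$ choices. For $x\notin B$, the vertex $y_1\in B_{xv}\setminus\{x,v\}$ has at most $w(v,x)$ choices, and $y_2\in B_{xu_1}\setminus\{x,u_1\}$ has at most $w(u_1,x)$ choices. Hence the count is at most
\[
 \sum_{u_1} w(v,u_1)\sum_{x\in V\setminus B_{vu_1}} w(v,x)\,w(u_1,x)\le 7D^{4/3}\sum_{u_1}w(v,u_1)\le 14D^{7/3},
\]
again using Lemma~\ref{lem:pencil} for the inner sum and the identity $\sum_x w(v,x)=2d(v)\le 2D$ from \eqref{eq:weight-identities} for the outer sum.

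The only point needing care is that the condition $x\notin B$ matches the exclusion $x\notin B_{uv}$ in Lemma~\ref{lem:pencil}. This works because in each pattern the relevant edge $B$ is the common edge of the two vertices to which the pencil lemma is applied: $u_1,u_2$ in pattern \eqref{eq:rooted-pattern-a}, and $v,u_1$ in pattern \eqref{eq:rooted-pattern-b}. If $B_1$ or $B_2$ does not exist, the corresponding weight is $0$, so such terms contribute nothing. Degenerate cases such as $x=v$ also contribute nothing, since $w(v,v)=0$, and in any case they only enlarge the upper bound.
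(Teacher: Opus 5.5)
Your proposal is correct and follows essentially the paper's proof: at most $2d(v)\le 2D$ choices for the pair in $B$ (for pattern \eqref{eq:rooted-pattern-b} you write this as $\sum_{u_1}w(v,u_1)$), times the Lemma~\ref{lem:pencil} bound $7D^{4/3}$ for the $x$-sum of $w\cdot w$, applied with $B$ as the common edge of the relevant pair. Your remark that distinctness only removes terms, so counting without it still gives an upper bound, lets you skip the paper's check that no coincidences arise; that check is not needed for this upper bound.
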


\begin{proof}
There are
\[
 \sum_{B\ni v}(|B|-1)(|B|-2)=2d(v)\le2D
\]
choices of an edge $B$ and an ordered pair of distinct points $u_1,u_2\in B\setminus\{v\}$. For a fixed such choice and a fixed $x\notin B$, a configuration of type~\eqref{eq:rooted-pattern-a} has $B_1=B_{u_1x}$ and $B_2=B_{u_2x}$. The number of completions of \eqref{eq:rooted-pattern-a} is at most
\[
 \sum_{x\notin B}w(u_1,x)w(u_2,x)\le7D^{4/3}
\]
by Lemma~\ref{lem:pencil} with $B=B_{u_1u_2}$. For \eqref{eq:rooted-pattern-b}, it is instead
\[
 \sum_{x\notin B}w(v,x)w(u_1,x)\le7D^{4/3},
\]
noting that $B=B_{vu_1}$.

These counts do not introduce coincidences among the six points. In the first type, $y_1$ cannot equal $v$ or $u_2$: either equality would force $B_1=B$ by a two-point intersection, contradicting $x\notin B$. The same argument excludes $y_2=v$ and $y_2=u_1$. If $y_1=y_2$, then $B_1=B_2$, and this common edge contains $u_1,u_2$, again equaling $B$. All other coincidences are excluded by the choices of $u_1,u_2,x,y_1,y_2$. In the second type, the same argument excludes $y_1=u_1,u_2$, excludes $y_2=v,u_2$, and excludes $y_1=y_2$. It also shows that the three supporting edges are distinct. Each supporting edge is uniquely determined by two of its points, so no additional choices of edges need to be counted. Multiplying each bound by $2D$ proves the estimates.
\end{proof}

\subsection{The selection process}

We construct a random independent set by considering the vertices one at a time. Each unprocessed vertex has a weight, which determines its selection probability when its turn arrives. Selecting a vertex may prevent some later vertices from being selected. When the current vertex is rejected, we increase the weights of those later vertices to compensate for this possible loss. A cap keeps all selection probabilities small.

Let $Q\subseteq V$ be a random subset, whose distribution will be specified in the proof of Lemma~\ref{lem:marginal}. We describe the selection process conditional on $Q$. Fix an order on $V$ before sampling $Q$, and list the vertices of $Q$ in this order as $x_1,\ldots,x_{|Q|}$. Let $\xi_1,\ldots,\xi_{|V|}$ be independent uniform random variables
on $[0,1]$, independent of the sampling used to obtain $Q$. Only the first $|Q|$ of these variables are used.

Choose parameters $0<a<\tau\le1/4$. Initially, no vertex is selected, so $J_0=\varnothing$, and we assign every vertex $u\in Q$ the weight $z_u=a$. Suppose that $x_1,\ldots,x_j$ $(j=0,1,\ldots,|Q|-1)$ have been processed, and let $J_j\subseteq\{x_1,\ldots,x_j\}$ be the selected set. At step $j+1$, write $x=x_{j+1}$ and $q=z_x$. A later vertex $u\in\{x_{j+2},\ldots,x_{|Q|}\}$ is called \emph{affected} if there are an already selected vertex $y\in J_j$ and an edge $B\in\mathcal E$ such that $\{x,y,u\}\subseteq B$. Thus, if $x$ is selected, selecting $u$ later would complete a forbidden triple. The affected vertices are determined before $\xi_{j+1}$ is revealed.

We now decide whether to select $x$ and update the later weights.

\begin{enumerate}[label=(\roman*),leftmargin=*,itemsep=4pt]
\item
If $\xi_{j+1}<q$, select $x$ and set $J_{j+1}=J_j\cup\{x\}$. Set the weight of every affected vertex to zero.

\item
If $\xi_{j+1}\ge q$, reject $x$ and set $J_{j+1}=J_j$. For every affected vertex $u$, replace its current weight $z_u$ by the new weight $\min\left\{\frac{z_u}{1-q},\tau\right\}$.
\end{enumerate}

All other unprocessed weights remain unchanged. Each affected vertex is updated only once at a step, even if several choices of $y$ or $B$ show that it is affected. Processed vertices are never reconsidered. After all vertices have been processed, the output is $J=J_{|Q|}$.

Every unprocessed weight lies in $\{0\}\cup[a,\tau]$, and a weight that becomes zero remains zero. In particular, $q\le\tau<1$, so the denominator $1-q$ is positive.
The output is independent: whenever the second vertex of an edge $B$ is selected, every later vertex of $B\cap Q$ is affected and receives weight zero. No third vertex of $B$
can therefore be selected. Hence $|J\cap B|\le2$ for each $B\in\mathcal E$.

We next explain why an affected weight is divided by $1-q$ when the current vertex is not selected, before applying the cap $\tau$. Let $\mathcal F_j$ denote the information given by $Q$ and $\xi_1,\ldots,\xi_j$. Conditional on $\mathcal F_j$, the current vertex is selected with probability $q$. For an affected later vertex $u$, let $z'_u$ be its weight after this step. Then
\[
 \E[z'_u\mid\mathcal F_j]
 =(1-q)\min\left\{\frac{z_u}{1-q},\tau\right\}
 \le z_u.
\]
If $z_u/(1-q)\le\tau$, equality holds, since
\[
 q\cdot0+(1-q)\frac{z_u}{1-q}=z_u.
\]
Thus, when the cap does not apply, increasing the weight when $x$ is not selected exactly compensates in expectation for setting it to zero when $x$ is selected. The cap can decrease this expectation, but it ensures that every selection probability is at most $\tau$.

The main task is to show that the cap does not reduce the selection probabilities too much. Lemma~\ref{lem:marginal} chooses $Q$, $a$, and $\tau$ and uses a potential argument to obtain a lower bound for $\mathbb P(v\in J)$. This probability averages over both the sampling of $Q$ and the selection process. Summing these lower bounds gives the expected size needed for Theorem~\ref{thm:main}.

\Needspace{11\baselineskip}
\subsection{A lower bound for the selection probabilities}

\begin{lemma}\label{lem:marginal}
For every $K\ge1$, there is $n_0=n_0(K)$ such that the following holds for $n\ge n_0$. Let $\mathcal H=(V,\mathcal E)$ be a finite linear hypergraph satisfying
\[
 3\le |B|\le Kn\quad(B\in\mathcal E),
 \qquad d(v)\le Kn^2\log n\quad(v\in V).
\]
There is a random independent set $J\subseteq V$ such that
\[
 \mathbb P(v \in J)\ge\frac{\gamma}{4n}\quad(v\in V),
 \qquad \gamma=\frac1{1000K}.
\]
There is no restriction on $|V|$.
\end{lemma}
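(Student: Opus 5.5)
We will prove Lemma~\ref{lem:marginal} by running the selection process on a random sample $Q$ obtained by independent sampling followed by a cleanup, and then tracking a potential for each fixed vertex $v$.

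\textbf{Sampling.} Let $Q_0$ contain each vertex independently with probability $p=\gamma'/n$, with $\gamma'$ a constant multiple of $\gamma$. Form $Q$ by deleting every $u\in Q_0$ for which
\begin{itemize}[leftmargin=*]
\item the sampled degree $d_{Q_0}(u)$ exceeds $L\log n$ for a suitable constant $L$, or
\item some edge $B\ni u$ has $|B\cap Q_0|>L'$, where $L'$ is a large constant (or $\log n/\log\log n$).
\end{itemize}
Since $\E[d_{Q_0}(u)\mid u\in Q_0]\le p^2 Kn^2\log n=O(\gamma'^2K\log n)$, Markov's inequality (or a second-moment/Chernoff bound) shows that $\mathbb P(v\in Q)\ge p/2$. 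Edges of size at most $Kn$ meet $Q_0$ in $O(K\gamma')$ points in expectation, so large sampled edges are rare as well.

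\textbf{Parameters and potential.} Take $a$ comparable to a constant and $\tau=1/4$, or $a$ a small constant and $\tau$ a larger one; what matters is that $a\cdot\mathbb P(v\in Q)\gtrsim\gamma/n$. For each step $j$, define a potential $\Phi_j(v)$ for the fixed vertex $v$. It is built from $z_v$ times a correction factor involving the weights and the number of already selected vertices on the edges through $v$. Schematically,
\[
 \Phi_j=z_v\prod_{B\ni v}\Big(1-\sum_{\text{pairs in }B}\cdots\Big)
\]
with logarithmic terms, chosen so that three properties hold:
\begin{itemize}[leftmargin=*]
\item $\Phi_0\ge a/2$ whenever $v\in Q$;
\item the final value is at most $\mathbf 1[v\in J]$;
\item whenever the cap is inactive, $\Phi$ is a supermartingale with only a small drift.
\end{itemize}

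\textbf{One-step estimate and summation.} The core step is to show that
\[
 \E[\Phi_{j+1}-\Phi_j\mid\mathcal F_j]\ge-(\text{error}_j),
\]
where the error comes only from interactions between distinct edges. These interactions take two forms. In the first, a processed vertex $x$ lies on an edge with $v$ while also affecting $v$ through another edge. In the second, the cap is triggered, which requires $z_v$ to have been multiplied by $(1-q)^{-1}$ many times. Many multiplications require many selected vertices forming forbidden-triple patterns with $v$, which are exactly the configurations \eqref{eq:rooted-pattern-a} and \eqref{eq:rooted-pattern-b}.

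To control these, we sum the errors and bound the expected number of surviving configurations in $Q_0$. Lemma~\ref{lem:rooted-triangles} bounds the configurations by $14D^{7/3}$. With $D=Kn^2\log n$, and using the fact that each configuration survives sampling with probability $p^5$ given $v$, the expected count is
\[
 O\big(D^{7/3}p^5\big)=O\big(n^{14/3}(\log n)^{7/3}n^{-5}\big)=o(1).
\]
Each surviving configuration contributes at most $\tau$. Lemmas~\ref{lem:pencil} and~\ref{lem:first-pencil} handle the two-edge interactions with $D^{4/3}p^3$-type bounds, which are likewise $o(1)$ after the appropriate normalization. The degree and edge-size truncation from the cleanup keeps the single-edge (diagonal) terms bounded by a constant times $\Phi$. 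Together these give $\mathbb P(v\in J\mid v\in Q)\ge a/4$ up to $o(1)$, and hence $\mathbb P(v\in J)\ge\gamma/(4n)$.

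\textbf{Main obstacle.} The hardest part is designing the potential so that the diagonal, same-edge contributions cancel exactly. Once that holds, only the cross-edge terms counted by Lemma~\ref{lem:rooted-triangles} remain. I would first try the potential
\[
 \Phi=z_v\prod_{B\ni v}\big(1-s_B\,\sigma_B\big),
\]
where $s_B=|J\cap B|$ is the number of selected points on $B$ and $\sigma_B$ is the sum of weights of the unprocessed points of $B$. I would then verify the conditional-expectation identity case by case, according to whether $x\in B$ and whether the cap applies.
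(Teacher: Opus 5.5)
There is a genuine gap in the parameter regime, and no choice of potential can repair it. With $p=\Theta(1/n)$ you need $a$ of order $\gamma/\gamma'$, a constant. Since the process requires $\tau\le 1/4$, this gives $\tau/a=O(1)$.

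Given $v\in Q$, the vertex $v$ lies in about $p^2d(v)$ sampled triples, up to $\gamma'^2K\log n$ of them. For a vertex late in the order, each of these blocks it with probability about $a^2$. Compensating would require $z_v$ to grow by a factor $\exp(\Theta(a^2d_Q(v)))=n^{\Theta(1)}$, which the cap forbids.

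\emph{Example.} Let $v$ lie on $\lfloor 2\log n/K\rfloor$ edges of size $\lfloor Kn\rfloor$ that pairwise meet only at $v$, with no other edges. Put $v$ last in the order; your argument uses no special order. The hypotheses of the lemma hold. Given $Q$, an edge whose sampled part has at least two points besides $v$ blocks $v$ with probability at least $a^2$, because its first two sampled points are processed with weight exactly $a$. These events depend on disjoint sets of the $\xi_i$, and $z_v\le\tau$ throughout. Hence $\mathbb P(v\in J)\le\tau p\,n^{-c}=o(1/n)$ for some $c=c(a,\gamma',K)>0$. With a constant edge threshold $L'$, even retention fails here, since $v$ lies on $\Theta(\log n)$ edges of size $Kn$.

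\emph{Where the accounting goes wrong.} Your bound $D^{7/3}p^5=o(1)$ is correct for $p=\Theta(1/n)$, but it controls the wrong term. The cap on $z_v$ is reached through ordinary single-edge events: a selected $y$ and the current $x$ on an edge through $v$. It is not reached through configurations \eqref{eq:rooted-pattern-a} or \eqref{eq:rooted-pattern-b}. The single-edge terms are also not bounded by a constant multiple of $\Phi$; they total $\Theta(a^2d_Q(v))=\Theta(\log n)$ relative to $z_v$. Your trial potential $z_v\prod_B(1-s_B\sigma_B)$ already loses about $z_vq(\sigma_B-q)$ in expectation whenever a vertex of $B$ is processed while $s_B=0$. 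These losses add up to the same amount.

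\emph{The missing idea.} We have $(pa)^2D\asymp\gamma^2K\log n$ however $pa\approx\gamma/n$ is split. So $\log(\tau/a)$ must itself be of order $\log n$, which forces $a$ to be polynomially small and $p\gg 1/n$. The paper takes $p=n^{-5/6}$, $a=\gamma n^{-1/6}$ and $\tau=n^{-19/120}$, so that $\Lambda=\log(\tau/a)\ge\log n/120$. Its potential is
$z_v\bigl[1-\lambda\sum_BG_B-(\log(z_v/a)+\sum_B\Phi_{B,b_B})/\Lambda\bigr]_+$, where:
\begin{itemize}
\item $\Phi_{B,0}=2e_2(z_{U_B})$ pre-charges exactly the drift your product loses;
\item the term $\log(z_v/a)$ pays for the growth of $z_v$ and keeps it below the cap;
\item the term $\lambda G_B$ keeps $M_B<\eta$.
\end{itemize}
The initial charge is only $2a^2D'/\Lambda\le 4800K\gamma^2<1/16$.

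\emph{Consequence for the cross-edge terms.} In this regime the cross-edge configurations are not rare, since $D^{7/3}p^5$ is of order $n^{1/2}(\log n)^{7/3}$. Charging each surviving configuration at most $\tau$ is therefore not enough. The paper weights each tuple by three factors:
\begin{itemize}
\item $p^6$ for $T\subseteq Q$;
\item $\E[q_x\mathbf 1_{\{y_1,y_2\}\subseteq J}]\le\tau^3$;
\item the one-step coefficient $8\tau^3/\sigma^2$ or $16\tau^2/\sigma$.
\end{itemize}
This yields relative losses $n^{-17/60}$ and $n^{-1/8}$, up to logarithmic factors.
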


\begin{proof}
Set $D=Kn^2\log n$ and
\[
 p=n^{-5/6},\qquad a=\gamma n^{-1/6},\qquad
 \tau=n^{-19/120}.
\]
All estimates below hold for sufficiently large $n$ depending only on $K$; in particular, $0<a<\tau\le1/4$.

\smallskip\noindent\emph{Sparsification.}
Sample each vertex independently with probability $p$, obtaining a set $S$. Delete $v\in S$ if $d_S(v)>20p^2D$ or if $v\in B$ for some edge $B\in \mathcal E$ with $|B\cap S|>3Kpn$. Let $Q$ be the remaining set. Run the selection process on $Q$, using uniform random variables independent of
$S$, and let $J$ be its output.

Conditional on $v\in S$, each triple containing $v$ has its other two vertices in $S$ with probability $p^2$. Hence
\[
 \E[d_S(v)\mid v\in S]=p^2d(v)\le p^2D.
\]
Thus Markov's inequality bounds the probability of deletion for large degree by $1/20$. At most $D$ edges contain $v$. For each such edge $B$, the variable $|B\cap S|-1$, conditional on $v\in S$, is binomial with mean at most $Kpn$. The Chernoff bound~\cite[Theorem~2.4.7(i)]{Roch} gives
\[
 \mathbb P(|B\cap S|>3Kpn\mid v\in S)\le e^{-Kpn/3}.
\]
A union bound therefore yields
\begin{equation}\label{eq:general-retention}
 \mathbb P(v\in Q)\ge p\bigl(1-1/20-De^{-Kpn/3}\bigr)\ge p/2
\end{equation}
for sufficiently large $n$.

Let $\mathcal E_Q$ consist of the intersections $B\cap Q$ of size at least three. The linear hypergraph $(Q,\mathcal E_Q)$ has maximum edge size at most $s'=3Kpn$, and $H[Q]$ has maximum degree at most $D'=20p^2D$. Moreover, any six distinct vertices all lie in $Q$ with probability at most $p^6$, since $Q\subseteq S$.

\smallskip\noindent\emph{The potential.}
Set
\[
 \eta=\frac18,\qquad \sigma=\frac1{128\log n},\qquad
 \lambda=e^{-\eta/(2\sigma)}=n^{-8},\qquad
 \Lambda=\log(\tau/a)=\frac{\log n}{120}-\log\gamma.
\]
We have $\tau\le\sigma$, $2\tau\eta/\sigma\le\log2$, and $\Lambda\ge1$. Fix $v\in V$. We define a nonnegative potential $C_v$ at each step. If $v\notin Q$, set $C_v=0$ throughout the process. For each realization of $Q$ containing $v$, consider
the edges in $\mathcal E_Q$ containing $v$. Before $v$ is processed, let $U_B$ be the unprocessed vertices of
$B\setminus\{v\}$, and let $b_B$ count the already selected vertices in $B$. Define
\[
 G_B=\prod_{u\in U_B}(1+z_u/\sigma),\qquad
 \Phi_{B,0}=2\sum_{\{u_1,u_2\}\subseteq U_B}z_{u_1}z_{u_2},\qquad
 \Phi_{B,1}=2\sum_{u\in U_B}z_u.
\]
Whenever $z_v>0$, we have $b_B\in\{0,1\}$ for every such edge. Before $v$ is processed, as long as $z_v>0$ and $C_v$ has not previously become zero, define
\[
 C_v=z_v\left[1-\lambda\sum_{B\ni v}G_B -\frac{\log(z_v/a)+\sum_{B\ni v}\Phi_{B,b_B}}{\Lambda} \right]_+,
\]
where $[t]_+=\max\{t,0\}$ denotes the positive part of $t$. Set $C_v=0$ if $z_v=0$. Once $C_v$ becomes zero, keep it equal to zero for the remainder of the process. At the step when $v$ is processed, replace a positive potential by the selection indicator of $v$, and keep this value fixed thereafter. All sums over edges here and in the estimates below are taken over $\mathcal E_Q$, subject to the indicated conditions.

Before $v$ is processed, $0\le C_v\le z_v$. Thus the replacement at its own step does not decrease the conditional expectation, and the final potential is at most $\mathbf1_{\{v\in J\}}$. Moreover, $C_v>0$ implies $z_v<\tau$ and
\[
 M_B:=\sum_{u\in U_B}z_u<\eta\qquad(B\ni v).
\]
Indeed, $z_v=\tau$ makes the logarithm equal to $\Lambda$, while $M_B\ge\eta$ gives $\lambda G_B\ge1$, because $\log(1+z_u/\sigma)\ge z_u/(2\sigma)$. In particular,
$\lambda G_B\le1$ whenever $C_v>0$.

Initially, at most $D'$ edges in $\mathcal E_Q$ contain $v$, and
\[
 \sum_{B\ni v}\Phi_{B,0}=2a^2d_Q(v)\le2a^2D',\qquad
 as'\le3K\gamma\le\eta/4.
\]
Consequently,
\[
 \lambda\sum_{B\ni v}G_B
 \le D'e^{-\eta/(4\sigma)}=D'n^{-4}=o(1),
 \qquad
 \frac{2a^2D'}{\Lambda}
 \le4800K\gamma^2<\frac1{16}.
\]
For large $n$, the initial potential is therefore at least $3a/4$ when $v\in Q$. By \eqref{eq:general-retention}, its unconditional expectation is at least $3pa/8$.

\smallskip\noindent\emph{Changes in one step.}
Consider a step before $v$ is processed. Let $x$ be the current vertex, let $q=z_x$, and let $\mathcal F$ be the sigma-algebra generated by $Q$ and all random choices before this step. Primes denote the values after the step. We need only consider $C_v>0$. For a nonnegative vector $t$, let $e_j(t)$ denote its elementary symmetric polynomial of degree $j$, with $e_0(t)=1$. Write $t_A=(t_u)_{u\in A}$ for its restriction to an index set $A$.

First consider an edge $B$ containing $v$ but not $x$, and let $A\subseteq U_B$ be the vertices affected by $x$. Write $r_u=z_u/\sigma$. Omitting the cap gives an upper bound for the updated values of $G_B$ and $\Phi_{B,b_B}$. Expanding the product, we obtain
\[
 \E[G'_B\mid\mathcal F]-G_B  \le\prod_{u\in U_B\setminus A}(1+r_u) \sum_{j\ge2}e_j(r_A)\bigl((1-q)^{1-j}-1\bigr).
\]
For $j\ge2$,
\[
 (1-q)^{1-j}-1\le(j-1)q(1-q)^{1-j},\qquad
 \sum_{j\ge2}(j-1)e_j(t)  \le\sum_{j\ge2}\binom j2e_j(t)  \le e_2(t)\prod_u(1+t_u).
\]
The last inequality follows by retaining products with disjoint sets of indices. The ratio of the products with arguments $r_u/(1-q)$ and $r_u$ is at most $\exp(2qM_B/\sigma)\le2$. Hence
\[
 \E[G'_B\mid\mathcal F]-G_B  \le\frac{4qG_B}{\sigma^2}  \sum_{\{u_1,u_2\}\subseteq A}z_{u_1}z_{u_2}.
\]
Similarly,
\[
 \E[\Phi'_{B,0}\mid\mathcal F]-\Phi_{B,0}  \le\frac{2q}{1-q}e_2(z_A),\qquad
 \E[\Phi'_{B,1}\mid\mathcal F]\le\Phi_{B,1}.
\]

\emph{Case 1: $x$ does not affect $v$.} Then $z_v$ remains unchanged. By linearity, at most one edge $B_*$ contains both $v$ and $x$. If it exists, then $b_{B_*}=0$, and no later vertex of $B_*$ is affected by $x$. Writing $R=U_{B_*}\setminus\{x\}$, we have $G'_{B_*}=G_{B_*}/(1+q/\sigma)$ and
\[
 \E[\Phi'_{B_*,b'_{B_*}}\mid\mathcal F]  =2(1-q)e_2(z_R)+2q\sum_{u\in R}z_u  \le\Phi_{B_*,0}.
\]
For every other edge $B$ containing $v$, let $A_B\subseteq U_B$ be its vertices affected by $x$. Applying the preceding estimates and $[t]_+\ge t$ gives
\[
 \E[C'_v\mid\mathcal F]\ge C_v-  z_v\sum_{\substack{B\ni v\\x\notin B}}  \left(\frac{4\lambda qG_B}{\sigma^2}  +\frac{2q}{(1-q)\Lambda}\right)e_2(z_{A_B}).
\]
The quadratic bound also applies when $b_B=1$, since the conditional expected change of the linear term is nonpositive. Using $\lambda G_B\le1$, $\Lambda\ge1$, $\sigma\le1$, and
$q\le1/4$, we conclude that
\begin{equation}\label{eq:first-loss}
 \E[C'_v\mid\mathcal F]\ge C_v-  \frac{8\tau^3q}{\sigma^2}  \sum_{\substack{B\ni v\\x\notin B}}  \bigl|\{\{u_1,u_2\}\subseteq U_B: x\text{ affects both }u_1,u_2\}\bigr|.
\end{equation}

\emph{Case 2: $x$ affects $v$.} Its unique common edge $B_*$ with $v$ has $b_{B_*}=1$. Selection of $x$ makes $C'_v=0$. On rejection, every vertex in $R=U_{B_*}\setminus\{x\}$ is affected. With $M_R=\sum_{u\in R}z_u$, omitting the caps gives
\[
 \log(G'_{B_*}/G_{B_*})  \le-\log(1+q/\sigma)+\frac{qM_R}{\sigma(1-q)}\le0.
\]
Indeed, $\log(1+q/\sigma)\ge q/(2\sigma)$ and $M_R/(1-q)\le\eta/(1-\tau)\le1/6$. Also,
\[
 \Phi_{B_*,1}-\Phi'_{B_*,1}  \ge2(q+M_R)-\frac{2M_R}{1-q}  =\frac{2q(1-M_{B_*})}{1-q}  \ge\frac{q}{1-q}\ge-\log(1-q).
\]
Since
\[
 \sum_{B\ni v}\Phi_{B,b_B}\ge\Phi_{B_*,1}  \ge2q\ge-\log(1-q),
\]
positivity of $C_v$ implies $\log(z_v/a)-\log(1-q)<\Lambda$. Thus the cap does not apply to the new weight of $v$, and
\[
 z'_v=\frac{z_v}{1-q}<\tau,\qquad  \log(z'_v/a)+\Phi'_{B_*,1}  \le\log(z_v/a)+\Phi_{B_*,1}.
\]
For any other edge $B$ containing $v$, let $A_B\subseteq U_B$ be its affected vertices. On rejection,
\[
 G'_B-G_B\le\frac{4qG_B}{\sigma}\sum_{u\in A_B}z_u,
 \qquad
 \Phi'_{B,b_B}-\Phi_{B,b_B}\le10q\sum_{u\in A_B}z_u.
\]
For the product bound, the weights do not decrease when $x$ is not selected, and
\[
 0\le\log\frac{G'_B}{G_B}  \le\frac{2q}{\sigma}\sum_{u\in A_B}z_u  \le\log2.
\]
Applying $e^t-1\le2t$ for $0\le t\le\log2$, with $t=\log(G'_B/G_B)$, gives
\[
 G'_B-G_B  \le2G_B\log\frac{G'_B}{G_B}  \le\frac{4qG_B}{\sigma}\sum_{u\in A_B}z_u.
\]
We next estimate the changes in $\Phi_{B,1}$ and $\Phi_{B,0}$. The linear term increases by at most $2q(1-q)^{-1}\sum_{u\in A_B}z_u$. For the quadratic term, every changed pair meets $A_B$, and its multiplicative increase is at most $(1-q)^{-2}-1\le4q$. Its total increase is therefore at most $8qM_B\sum_{u\in A_B}z_u$. Combining these bounds with $(1-q)z'_v=z_v$ yields
\begin{equation}\label{eq:second-loss}
 \E[C'_v\mid\mathcal F]\ge C_v- \frac{16\tau^2q}{\sigma}  \sum_{\substack{B\ni v\\B\ne B_*}}  \bigl|\{u\in U_B:x\text{ affects }u\}\bigr|.
\end{equation}

\smallskip\noindent\emph{Summing the losses.}
We average over both the selection process and the sampling. Every restricted edge comes from a unique original edge, by linearity. In the following configuration count, $B,B_*,B_1,B_2$ denote the corresponding original edges. For an incidence counted in \eqref{eq:first-loss}, order $u_1,u_2$ by the fixed order, and choose the first already selected witness $y_i$ for the effect of $x$ on $u_i$, for $i=1,2$. Then
\[
 \{v,u_1,u_2\}\subseteq B,\qquad
 \{x,u_1,y_1\}\subseteq B_1,\qquad
 \{x,u_2,y_2\}\subseteq B_2,\qquad x\notin B.
\]
This is the first configuration in Lemma~\ref{lem:rooted-triangles}.

For an incidence in \eqref{eq:second-loss}, denote the affected vertex by $u_1$, and choose the first vertex $u_2\in(B\cap Q)\setminus\{v,u_1\}$. This set is nonempty because
the restricted edge has size at least three. Choose the first selected witnesses $y_1,y_2$ for the effects of $x$ on $v,u_1$, respectively. Then
\[
 \{v,u_1,u_2\}\subseteq B,\qquad
 \{x,v,y_1\}\subseteq B_*,\qquad
 \{x,u_1,y_2\}\subseteq B_2,\qquad x\notin B.
\]
This is the second configuration in that lemma. In both cases, all six vertices belong to $Q$ and are distinct: the three edges meet at three different vertices, and linearity prevents any further intersection. The auxiliary vertex $u_2$ in the second case may already have been processed, which does not affect this argument.

Each tuple determines the counted edge, the affected vertex or pair, and the current vertex $x$. It is therefore assigned at most once in any run. By Lemma~\ref{lem:rooted-triangles}, there are at most $14D^{7/3}$ possible tuples of each type in the original hypergraph.

Only tuples with both witnesses preceding $x$ can contribute. Fix one such tuple, and let $T=\{v,u_1,u_2,x,y_1,y_2\}$ be its set of six vertices. Fix a realization $Q=Q_0$ of positive probability with $T\subseteq Q_0$, and let $q_x$ be the weight of $x$ immediately before it is processed. Then
\[
 \E\!\left[
 q_x\mathbf1_{\{y_1,y_2\}\subseteq J}
 \,\middle|\,Q=Q_0
 \right]
 =
 \mathbb P(x,y_1,y_2\in J\mid Q=Q_0)
 \le\tau^3.
\]
The equality follows by conditioning on the history immediately before $x$ is processed: both witnesses have already been processed, and the conditional probability
of selecting $x$ is $q_x$. The inequality follows by conditioning successively at the three processing steps, each of which has conditional selection probability at most $\tau$.

For an upper bound, we may omit the condition $C_v>0$ and require only that $y_1,y_2$ were selected before $x$, without requiring them to be the first selected witnesses
in the fixed order. The tuple contributes nothing unless $T\subseteq Q$. Averaging the preceding bound over the realizations $Q_0$ containing $T$, its contribution to
the expected sum of the factors $q$ is therefore at most
\[
 \tau^3\mathbb P(T\subseteq Q)
 \le\tau^3\mathbb P(T\subseteq S)
 =p^6\tau^3.
\]
The last equality uses the independent sampling of the six distinct vertices of $T$.  Multiplying by $8\tau^3/\sigma^2$ or $16\tau^2/\sigma$ and summing over the tuples bounds the total expected loss by $256p^6D^{7/3}\left(\tau^6/\sigma^2+\tau^5/\sigma\right)$. After division by $pa$, the two terms are respectively
\[
 O_K\bigl(n^{-17/60}(\log n)^{13/3}\bigr),
 \qquad
 O_K\bigl(n^{-1/8}(\log n)^{10/3}\bigr).
\]
Both tend to zero. Extend each potential by its final value to step $|V|$. Take expectations in the inequalities for the change of $C_v$ and sum over all steps. Since the initial expectation is at least $3pa/8$ and the final potential is at most $\mathbf1_{\{v\in J\}}$, we obtain
\[
 \mathbb P(v\in J)\ge\frac{3pa}{8}-o(pa)  \ge\frac{pa}{4}=\frac{\gamma}{4n}
\]
for all sufficiently large $n$.
\end{proof}

\subsection{Completion of the proof}

\begin{proof}[Proof of Theorem~\ref{thm:main}]
Write $n=\sqrt N$. Since distinct edges of $\mathcal H$ cannot contain the same triple,
\[
 \sum_{v\in V}d(v)  =3\sum_{B\in\mathcal E}\binom{|B|}{3}  \le6C_0n^4\log n.
\]
Let $V_0=\{v\in V:d(v)\le12C_0n^2\log n\}$. Then $|V_0|\ge n^2/2$. Restrict $\mathcal H$ to $V_0$, discarding edges of size less than three. Its edges have size at most $C_0n$ and its associated hypergraph is $H[V_0]$. Lemma~\ref{lem:marginal}, with $K=12C_0$, gives a random independent set $J\subseteq V_0$ for which
\[
 \E|J|\ge\frac{\gamma|V_0|}{4n}\ge\frac{\gamma n}{8}.
\]
Some realization has at least $(\gamma/8)\sqrt N$ vertices. Taking $c=\gamma/8$ proves the theorem.
\end{proof}

\section{Proofs of the geometric applications}
\label{sec:applications}

We first collect the required collinear-triple estimates. Over $\mathbb R$, the following bound is due to Payne and Wood~\cite[Lemma~2.1]{PW}, using the Szemer\'edi--Trotter theorem~\cite{ST}. The other cases follow from the incidence theorems of T\'oth~\cite{Toth} and Lewko~\cite{Lewko}. An incidence bound of the same order over prime fields was obtained independently by Miao and Xie~\cite{MiaoXie}.

\begin{lemma}\label{lem:geometric-triples}
Let $V\subseteq\mathbb F^2$ be a finite set of $N\ge2$ points with at most $s\ge2$ points on any line. Write $p$ for the characteristic of $\mathbb F$, with $p=\infty$ in characteristic zero and $1/\infty=0$. Then
\[
 \sum_{\ell}\binom{|V\cap\ell|}{3}  =O\!\left(N^2\log s+Ns^2+\frac{N^3}{p}\right),
\]
where the sum is over lines containing at least three points of $V$, and the implied constant is absolute.
\end{lemma}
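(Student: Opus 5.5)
We split the lines into dyadic classes according to their richness and bound each class with an incidence theorem. For $2^{k}\le t<2^{k+1}$, let $L_k$ be the set of lines with $t\le|V\cap\ell|<2t$, where $3\le t\le s$. Then
\[
 \sum_{\ell\in L_k}\binom{|V\cap\ell|}{3}\le (2t)^3|L_k|,
\]
and we need an upper bound for $|L_k|$, the number of $t$-rich lines.

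Over $\mathbb R$ we use the Szemer\'edi--Trotter bound $|L_k|=O(N^2/t^3+N/t)$. The class then contributes $O(N^2+Nt^2)$. There are $O(\log s)$ classes with $t\le s$, so the first terms give $O(N^2\log s)$. The second terms form a geometric series dominated by $t\approx s$, giving $O(Ns^2)$. This is Payne and Wood's Lemma~2.1.

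In characteristic zero we proceed identically. A field of characteristic zero finitely generated over $\mathbb Q$ embeds in $\mathbb C$, and the configuration of $V$ together with its rich lines involves only finitely many elements. T\'oth's complex Szemer\'edi--Trotter theorem then gives the same rich-line bound with an absolute constant.

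In characteristic $p>0$ we invoke Lewko's incidence theorem. I expect it to give a rich-line bound of the form
\[
 |L_k|=O\!\left(\frac{N^2}{t^3}+\frac{N}{t}+\frac{N^2}{pt^2}\right),
\]
possibly valid only in some range of $N$ relative to $p$. The extra term contributes $O(N^2t/p)$ per class, which sums to $O(N^2s/p)\le O(N^3/p)$ since $s\le N$.

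If Lewko's bound holds only for $N$ up to some power of $p$, we handle the remaining case by a trivial count. Each triple of points lies on at most one line, so
\[
 \sum_\ell\binom{|V\cap\ell|}{3}\le\binom N3=O(N^3).
\]
This is already $O(N^3/p)$ whenever $p$ is bounded. For larger $N$ relative to $p$, we use the pair-counting estimate
\[
 \sum_\ell |V\cap\ell|^3\le s\sum_\ell|V\cap\ell|^2\le sN^2,
\]
together with the fact that $s\ge N/(p+1)$ is forced, because the $p+1$ parallel classes of directions in $\mathbb F_p$-type configurations must absorb all $N$ points.

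The main obstacle is this positive-characteristic step. We must state Lewko's theorem in exactly the form needed and check that every regime of $N$ versus $p$ is covered by some term of the claimed bound. The real and complex cases are routine dyadic summations.
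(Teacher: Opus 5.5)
Your real and characteristic-zero cases are fine and agree with the paper, which sums over exact richness levels via $\sum_{j=3}^{s}\binom{j-1}{2}|\mathcal L_j|$ instead of dyadically. The positive-characteristic step, however, has a genuine gap.

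The rich-line bound you anticipate, $|L_t|=O(N^2/t^3+N/t+N^2/(pt^2))$, is false. For $V=\mathbb F_p^2$ and $t=p$ it gives $O(p)$, yet there are $p^2+p$ lines with $p$ points. Your resulting estimate $O(N^2\log s+Ns^2+N^2s/p)=O(p^4\log p)$ is also beaten there by the true value $(p^2+p)\binom p3\approx p^5/6$. Restricting the guess to a range of $N$ does not rescue it. For $N=O(p)$ the extra term is superfluous. For $V=\mathbb F_p\times B$ with $|B|=m$, there are $p^2$ lines with exactly $m$ points, against your bound $O(p^2/m+p)$.

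In the form the paper uses, Lewko's theorem reads $t|\mathcal L_t|\le3(N|\mathcal L_t|)^{2/3}+N+|\mathcal L_t|+2N|\mathcal L_t|/p$. This is vacuous once $t$ is below about $N/p$. Your fallback for that regime also fails. To turn $\sum_\ell\binom{|V\cap\ell|}3\le sN^2$ into $O(N^3/p)$ you need an \emph{upper} bound $s=O(N/p)$. You instead invoke the lower bound $s\ge N/(p+1)$, which points the wrong way. It is moreover false over non-prime fields: in $\mathbb F_{p^k}^2$ one has $N=p^{2k}$ and $s=p^k$. For example, with $N=p^{3/2}$ and $s=p$ the target is $O(p^{7/2})$, while $sN^2=p^4$.

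What is missing is a threshold split, as in the paper's proof. Put $j_0=\max\{3,\lceil2+4N/p\rceil\}$. For $t\ge j_0$ one has $|\mathcal L_t|(1+2N/p)\le t|\mathcal L_t|/2$. These terms can then be absorbed, and the Szemer\'edi--Trotter bound $|\mathcal L_t|=O(N^2/t^3+N/t)$ follows, contributing $O(N^2\log s+Ns^2)$ exactly as in your real case. For $3\le t<j_0$, use only the pair count $|\mathcal L_t|\le N(N-1)/(t(t-1))$. Each such level then contributes $\binom{t-1}2|\mathcal L_t|\le N^2/2$, so together they give $O(N^2j_0)=O(N^2+N^3/p)$. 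Dyadically, a class at scale $t<j_0$ contributes $O(N^2t)$. Thus the $N^3/p$ term is the threshold $N/p$ times the trivial per-level count $N^2$, not an extra term in a rich-line bound. No case analysis of $N$ against $p$ and no prime-field structure is needed.
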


\begin{proof}
We may assume that $s$ is an integer. For $j\ge2$, let $\mathcal L_j$ be the lines containing at least $j$ points of $V$. Counting pairs gives
\begin{equation}\label{eq:pair-rich-lines}
 |\mathcal L_j|\le\frac{N(N-1)}{j(j-1)}.
\end{equation}

Suppose first that $\mathbb F$ has characteristic zero. The coordinates of $V$ generate a finitely generated extension of $\mathbb Q$, which embeds in $\mathbb C$. Such an embedding preserves collinearity, so T\'oth's incidence theorem gives
\[
 j|\mathcal L_j|  \le C\bigl((N|\mathcal L_j|)^{2/3}+N+|\mathcal L_j|\bigr)
\]
for an absolute constant $C$. For $j\ge2C$, absorb the last term into the left-hand side. For smaller $j$, use \eqref{eq:pair-rich-lines}. Thus $|\mathcal L_j|=O(N^2/j^3+N/j)$.
The identity
\begin{equation}\label{eq:triple-sum}
 \sum_{\ell}\binom{|V\cap\ell|}{3}  =\sum_{j=3}^{s}\binom{j-1}{2}|\mathcal L_j|
\end{equation}
now gives $O(N^2\log s+Ns^2)$.

Suppose next that $\mathbb F$ has positive characteristic $p$. Lewko's theorem gives
\[
 j|\mathcal L_j|  \le3(N|\mathcal L_j|)^{2/3}+N+|\mathcal L_j|  +\frac{2N|\mathcal L_j|}{p}.
\]
Set $j_0=\max\{3,\lceil2+4N/p\rceil\}$. For $j\ge j_0$, one has $j|\mathcal L_j|/2  \le 3(N|\mathcal L_j|)^{2/3}+N$, and hence $|\mathcal L_j|=O(N^2/j^3+N/j)$. Use \eqref{eq:pair-rich-lines} for $3\le j<j_0$ and this last bound for $j\ge j_0$ in \eqref{eq:triple-sum}. The result is
\[
 O\!\left(N^2j_0+N^2\log s+Ns^2\right)  =O\!\left(N^2\log s+Ns^2+\frac{N^3}{p}\right).
\]
\end{proof}

\begin{proof}[Proof of Theorem~\ref{thm:planar} and Corollary~\ref{cor:finite-field}]
Let $N=|V|$, and let $\mathcal E$ consist of the sets $V\cap\ell$ of size at least three. Distinct lines meet in at most one point, so $\mathcal H=(V,\mathcal E)$ is linear. Let $H$ be the associated $3$-uniform hypergraph. Its edges are exactly the collinear triples of $V$, so its independent sets are precisely the subsets of $V$ in general position.

Since every edge has size at most $A\sqrt N$, Lemma~\ref{lem:geometric-triples} gives
\[
 \sum_{B\in\mathcal E}\binom{|B|}{3}  =O_A(N^2\log N)
\]
in characteristic zero, and also in positive characteristic when $N\le p\log N$. Theorem~\ref{thm:main}, with $C_0$ chosen in terms of $A$, proves both assertions. The same argument over arbitrary characteristic-zero fields proves the extension of Theorem~\ref{thm:planar} in Remark~\ref{rem:characteristic-zero}.
\end{proof}

\begin{proof}[Proof of Theorem~\ref{thm:planar2}]
Apply Theorem~\ref{thm:planar} with $A=1$, and choose $K_0\ge\max\{1,N_0,c^{-2}\}$. If $N=|V|\ge K_0k^2$ and $V$ contains no $k$ collinear points, then every line contains at most $k-1\le\sqrt N$ points of $V$. Thus $V$ contains at least $c\sqrt N\ge k$ points in general position. The same argument works over every characteristic-zero field, completing the proof of Remark~\ref{rem:characteristic-zero}.
\end{proof}

\section*{Acknowledgements}
The first author thanks Claudiu Valculescu for many fruitful discussions, particularly during his time at EPFL.

\end{document}